\documentclass{amsart}
\usepackage{amsmath, amssymb,epic,graphicx,mathrsfs,enumerate}
\usepackage[all]{xy}

\usepackage{amsthm}
\usepackage{amssymb}
\usepackage{latexsym}
\usepackage{longtable}
\usepackage{epsfig}
\usepackage{amsmath}
\usepackage{hhline}


 \DeclareMathOperator{\perm}{Sym}
 \DeclareMathOperator{\soc}{soc}

 \DeclareMathOperator{\frat}{Frat}

\DeclareMathOperator{\gl}{GL}

 \DeclareMathOperator{\h}{{H^2}}

\renewcommand{\emptyset}{\varnothing}

\newcommand{\pq}{$p\cdot q$ }

\newtheorem{thm}{Theorem}

 \newtheorem{lemma}[thm]{Lemma}
\newtheorem{prop}[thm]{Proposition} 
 \newtheorem{defn}[thm]{Definition}
\newtheorem{question}[]{Question} 
\numberwithin{equation}{section}

\renewcommand{\footnote}{\endnote}
\newcommand{\ignore}[1]{}\makeglossary

\begin{document}
	\bibliographystyle{amsplain}
	\title[The order of the non-Frattini elements]{On the orders of the non-Frattini elements\\ of a finite group}

\author{Andrea Lucchini}
\address{
	Andrea Lucchini\\ Universit\`a degli Studi di Padova\\  Dipartimento di Matematica \lq\lq Tullio Levi-Civita\rq\rq \\email: lucchini@math.unipd.it}

	\begin{abstract}Let $G$ be a finite group and let $p_1,\dots,p_n$ be distinct primes. If $G$ contains an element of order $p_1\cdots p_n,$ then there is an element in $G$ which is not contained in the Frattini subgroup of $G$ and whose order is divisible by  $p_1\cdots p_n.$
	\end{abstract}
	\maketitle
\section{Introduction}	
Let $G$ be a finite group $G.$ We say that $g\in G$ is a non-Frattini element of $G$ if $g\notin \frat(G)$, where $\frat(G)$ denotes the Frattini subgroup of $G.$ Let $d(G)$  be the smallest cardinality of a generating set of $G.$ In \cite {hidden} L. G. Kov\'acs,  J. Neub\"user and  B. H. Neumann proved that the set $G\setminus \frat(G)$ of the non-Frattini elements of $G$ coincides with the set $\kappa_{d(G)+1}(G)$ of those elements $g$ of $G$ that
are not omissible from some family of $d(G)+1$ generators of $G.$  In the same paper they noticed that if a prime $p$ divides $|G|,$  then $p$ divides the order of some element of $G\setminus \frat(G)=\kappa_{d(G)+1}(G).$ This follows immediately from the fact that if a prime $p$ divides $|G|,$ then $p$ divides also $|G/\frat(G)|$ \cite[III Satz 3.8]{hup}. In other words the set $\pi(G)$ of the prime divisors of $|G|$ can be deduced from the knowledge of the non-Frattini elements of $G$ or, equivalently, looking at the elements which appear in the minimal generating sets of cardinality $d(G)+1.$ One can ask whether some more elaborated  information about the arithmetical properties of the orders of the elements of $G$ can be recovered from   $G\setminus \frat(G).$ 
Looking for results in this direction, we consider in this note the prime graph of $G.$

\

If $G$ is a finite group,  its prime graph $\Gamma(G)$ is defined as follows: its vertices are
the primes dividing the order of $G$ and two vertices $p, q$ are joined by an edge if \pq divides the order of some  element in $G.$ It has been introduced by Gruenberg and Kegel in the 1970s and  studied extensively in recent years (see for examples \cite{kon}, \cite{wil}, \cite{zav}). We consider now the subgraph $\tilde \Gamma(G)$ of $\Gamma(G)$ (called the non-Frattini prime graph) defined by saying that two vertices $p$ and $q$ are joined by an edge if and only if \pq divides the order of some element of $G\setminus \frat(G).$ Our main result is the following.

\begin{thm}\label{pg}
Let $G$ be a finite group. Then the prime graph $\Gamma(G)$ and the non-Frattini prime graph $\tilde\Gamma(G)$ coincide.
\end{thm}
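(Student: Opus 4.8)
Since $\tilde\Gamma(G)$ is by construction a subgraph of $\Gamma(G)$ and the two graphs have the same vertex set (if a prime $p$ divides $|G|$ it divides $|G/\frat(G)|$, hence divides the order of some non-Frattini element), it suffices to show that every edge of $\Gamma(G)$ is an edge of $\tilde\Gamma(G)$; equivalently, for distinct primes $p,q$: if $G$ has an element whose order is divisible by $pq$, then it has a non-Frattini element whose order is divisible by $pq$. I would prove this by choosing a counterexample $(G,\{p,q\})$ with $|G|$ minimal, and fixing $g\in G$ of order exactly $pq$. The pivotal remark is that non-Frattini-ness passes to quotients: for $N\trianglelefteq G$ one has $\frat(G)N/N\le\frat(G/N)$, so a non-Frattini element of $G/N$ of order divisible by $pq$ lifts to a non-Frattini element of $G$ of order divisible by $pq$. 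Hence, by minimality, \emph{no} quotient $G/N$ with $1\neq N\trianglelefteq G$ has an element of order divisible by $pq$. In particular $g\in\frat(G)$ (otherwise $g$ itself would do); since $\frat(G)$ is nilpotent this already forces $O_p(\frat(G))\neq 1\neq O_q(\frat(G))$, and, replacing $g$ if necessary, we may assume $g\in Z(\frat(G))$, so that $z:=g^q$ and $y:=g^p$ are commuting central elements of $\frat(G)$ of orders $p$ and $q$.

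I would then read off the structure imposed by the fact that no proper quotient has a $pq$-element. If $N$ is a minimal normal subgroup of $G$ and $N\cap\frat(G)=1$, then $\langle g\rangle\cap N=1$ (because $g\in\frat(G)$), so $gN$ would have order $pq$ in $G/N$, a contradiction; hence $N\cap\frat(G)\neq 1$ and so $N\le\frat(G)$, and in particular $N$ is elementary abelian. For the same reason $N$ is an $r$-group with $r\in\{p,q\}$ (otherwise $gN$ has order $pq$), and if $N$ is a $p$-group then $\langle g\rangle\cap N$ is a nontrivial $p$-subgroup of $\langle g\rangle$, hence contained in $\langle z\rangle$, so $z\in N$; thus $G$ has a \emph{unique} minimal normal $p$-subgroup $A$, at most one minimal normal $q$-subgroup $B$, and $\soc(G)$ — which lies in $\frat(G)$ — is $A$, $B$, or $A\times B$. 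A second ingredient is a pair of centralizer restrictions: \emph{if $a$ is a $p$-element with $a\notin\frat(G)$ then $q\nmid|C_G(a)|$, and symmetrically with $p,q$ interchanged.} Indeed, if such an $a$ commuted with a nontrivial $q$-element $b$, then $ab$ would have order divisible by $pq$ and $ab\notin\frat(G)$ (as $\langle ab\rangle=\langle a\rangle\times\langle b\rangle$), contradicting minimality. So although $\frat(G)$ itself contains a commuting $(p,q)$-pair (namely $z,y$), every such pair is trapped inside $\frat(G)$.

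Combining these restrictions with the nilpotency of $\frat(G)$ yields a very rigid picture. Set $Y_0:=O_q(\frat(G))\neq 1$, a normal $q$-subgroup of $G$ contained in $\frat(G)$. By the centralizer restriction, every $p$-element $a\notin\frat(G)$ acts fixed-point-freely on $Y_0$; moreover, for $P\in\Syl_p(G)$ one has $C_P(Y_0)=P\cap\frat(G)$ (the inclusion $\supseteq$ because $\frat(G)$ is nilpotent, and $\subseteq$ by the previous sentence), and $P\cap\frat(G)$ is the Sylow $p$-subgroup $O_p(\frat(G))$ of $\frat(G)$. Therefore $\bar P:=P/(P\cap\frat(G))$ — a Sylow $p$-subgroup of $G/\frat(G)$, nontrivial since $p\mid|G/\frat(G)|$ — acts faithfully and fixed-point-freely on $Y_0$, so $Y_0\rtimes\bar P$ is a Frobenius group and $\bar P$ is cyclic or a generalized quaternion group; symmetrically, the Sylow $q$-subgroups of $G/\frat(G)$ are cyclic or generalized quaternion. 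The remaining, and in my view the genuinely difficult, step is to turn this into a contradiction. The plan is to play off the fact that the minimal normal subgroup $A\le\frat(G)$ has no complement in $G$ (since $\frat(G)$ consists of non-generators) against the coprime-action/Frobenius constraints just obtained together with the fact that in $G/A$ the image of $y$ (of order $q$) centralizes no element of order $p$; analysing the $\FF_p[G]$-module $A$ (and, dually, the module $O_q(\frat(G))$) under these constraints should produce either a complement to $A$ in $G$, contradicting $A\le\frat(G)$, or a $q$-element outside $\frat(G)$ commuting with an element of order $p$, which by the centralizer argument immediately yields the sought non-Frattini element of order divisible by $pq$.
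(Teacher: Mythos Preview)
The reduction you carry out in the first two paragraphs is sound and runs parallel to the paper's: minimal counterexample, every minimal normal subgroup lies in $\frat(G)$ and is a $p$- or $q$-group, and the centraliser restriction forcing Sylow $p$- and $q$-subgroups of $G/\frat(G)$ to act fixed-point-freely on the opposite-prime part of $\frat(G)$. But your last paragraph is not a proof, only a hope: you say the module-theoretic analysis ``should produce'' either a complement to $A$ or a commuting $(p,q)$-pair outside $\frat(G)$, without indicating how. Nothing you have established forces either outcome; cyclicity (or quaternion-ness) of the Sylow subgroups of $G/\frat(G)$ is not by itself enough to split the extension or to manufacture the commuting pair you want. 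Indeed, the paper's own discussion of $(p,q)$-Frattini triples shows that groups with exactly this Frattini structure and cyclic Sylow subgroups in the quotient can exist (subject to a cohomological obstruction), so without a genuinely new ingredient your plan cannot close.

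The ingredient you are missing, and which the paper supplies, is to bring the prime $2$ into play. The paper does not argue by contradiction at all; it explicitly \emph{constructs} the non-Frattini element. After reducing (much as you do) to $\frat(G)=P_1\times P_2$ with each $P_i$ an irreducible $G$-module, it picks a minimal normal subgroup $M/\frat(G)$ of $G/\frat(G)$. If this is abelian, a Frattini/Schur--Zassenhaus argument shows it is a $p_i$-group and a direct construction yields $\gamma\notin\frat(G)$ with $\pi(\gamma)=\{p,q\}$. If it is nonabelian, one takes an involution $z\frat(G)\in M/\frat(G)$: an involution in $\gl(P_i)$ with $p_i$ odd either has eigenvalue $1$ or is the central scalar $-1$, and since $M/\frat(G)$ has trivial centre the latter is excluded, so $z$ fixes a nontrivial $x_i\in P_i$. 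Then $\gamma=z\cdot x_1x_2$ is a non-Frattini element whose order is divisible by $p\cdot q$ (and possibly also by $2$, which is harmless for the prime-graph statement). This involution argument is the crux, and it is exactly what your outline lacks.
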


Notice that, although $\pi(G)=\pi(G/\frat(G))$, it is not in general true that $\Gamma(G)=\Gamma(G/\frat(G)).$ Consider for example $G=\langle a,b \mid a^3=1, b^4=1, a^b=a^{-1}\rangle.$ We have $|G|=12$ and $|ab^2|=6,$ so $\Gamma(G)$ is the complete graph on the two vertices 2 and 3. However $\frat(G)=\langle b^2\rangle$ and $G/\frat(G)\cong \perm(3),$ hence $\Gamma(G/\frat(G))$ consists of two isolated vertices. 

\

Clearly if \pq divides the order of some elements of $G$, then $G$ contains an element of order \pq. This is no more true for $G\setminus \frat(G).$
For example all the elements of order \pq in a  cyclic group $G$ of order $p^2\cdot q^2$ are contained in $\frat(G).$ A question that is not easy to be answered is, in the case when $G\setminus \frat G$ contains an element of order divisible by $p\cdot q,$   whether this element could be chosen so that $p$ and $q$ are the unique prime divisors of its order. We prove that this is true for soluble groups and we provide a reduction to this question to a problem concerning the finite nonabelian simple groups and their representations.

\section{Proofs and remarks}
Given an element $g$ in a finite group $G,$ we will denote by $\pi(g)$ the set of the prime divisors of $|g|.$ Theorem \ref{pg} is a consequence of the following stronger result.

\begin{thm}\label{main}
	Let $G$ be a finite group and suppose that $\pi=\{p_1,\dots p_n\}$ is a subset of the set $\pi(G)$ of the prime divisors of $|G|.$ Set $\pi^*=\pi$ if
	$G$ is soluble, $\pi^*=\pi\cup\{2\}$ otherwise. 
	If $G$ contains an element $g$ of order $p_1\cdots p_n,$ then there exists an element $\gamma$ in $G\setminus \frat (G)$ such that $\pi\subseteq \pi(\gamma) \subseteq \pi^*.$
\end{thm}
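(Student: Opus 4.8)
The plan is to prove Theorem~\ref{main} by induction on $|G|$, with the conclusion of the theorem itself (for smaller groups and, when convenient, for proper subsets of $\pi$) as the inductive hypothesis. If $\frat(G)=1$ we may take $\gamma=g$, so assume $\frat(G)\neq 1$; throughout I would use that $\frat(G)$ is nilpotent and that $\frat(G/N)=\frat(G)/N$ whenever $N\trianglelefteq G$ and $N\leq\frat(G)$. Note also that $G/N$ is soluble exactly when $G$ is, so the relevant set $\pi^*$ does not change on passing to such quotients.

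First I would clear away the routine reductions. If some element of order $p_1\cdots p_n$ lies outside $\frat(G)$, take it as $\gamma$. If a prime $q$ divides $|\frat(G)|$ with $q\notin\pi^*$, choose a minimal normal subgroup $N$ of $G$ inside $O_q(\frat(G))$; since $q$ is coprime to $p_1\cdots p_n$, the image of $g$ in $G/N$ still has order $p_1\cdots p_n$, induction supplies a non-Frattini $\bar\gamma\in G/N$ with $\pi\subseteq\pi(\bar\gamma)\subseteq\pi^*$, and as $\gcd(|N|,|\bar\gamma|)=1$ the Schur--Zassenhaus theorem applied to the preimage of $\langle\bar\gamma\rangle$ yields a lift $\gamma$ with $|\gamma|=|\bar\gamma|$ and $\gamma\notin\frat(G)$. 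The same induction-and-lift step succeeds for \emph{any} minimal normal $N\leq\frat(G)$ as soon as $G/N$ still has an element of order $p_1\cdots p_n$, since then any lift $\gamma$ with $\gamma N=\bar\gamma$ satisfies $\pi\subseteq\pi(\bar\gamma)\subseteq\pi(\gamma)\subseteq\pi(\bar\gamma)\cup\pi(\frat(G))\subseteq\pi^*$ (using that, after the previous reduction, $\pi(\frat(G))\subseteq\pi^*$). So we may assume that every element of order $p_1\cdots p_n$ lies in $\frat(G)$ and that $G/N$ has no such element for any minimal normal $N\leq\frat(G)$. A short argument then forces: every minimal normal subgroup of $G$ contained in $\frat(G)$ is a $p_i$-group; hence (distinct minimal normal subgroups of the same prime meet trivially, while the $p_i$-part of $g$ must lie in $N$) there is, for each $i$, a unique minimal normal $p_i$-subgroup $N_i$ of $G$ inside $\frat(G)$, the $p_i$-part of $g$ lies in $N_i$, and $N_i\leq Z(O_{p_i}(\frat(G)))$; consequently $\frat(G)$ is a $\pi$-group with $O_{p_i}(\frat(G))\neq 1$ for all $i$.

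Next I would exploit centralizers. Using the last remarks one may choose $g=g_1\cdots g_n$ (with $g_i$ of order $p_i$, $g_i\in N_i$) inside $Z(\frat(G))$, so that $\frat(G)\leq C_G(g)$. The point is: if $C_G(g)$ contains an element $u$ of prime power order $q^a$ with $q\in\pi^*$ and $u\notin\frat(G)$, then we are done. Indeed, if $q=p_i$ set $\gamma:=u\cdot(g g_i^{-1})$; then $u$ commutes with the power $g g_i^{-1}$ of $g$, the orders are coprime, so $p_1\cdots p_n$ divides $|\gamma|$ and $\pi(\gamma)=\pi$. If $q=2\notin\pi$ (possible only in the non-soluble case) set $\gamma:=ug$; then $2\,p_1\cdots p_n$ divides $|\gamma|$ and $\pi(\gamma)=\pi\cup\{2\}$. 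In either case $u\in\langle\gamma\rangle$, hence $\gamma\notin\frat(G)$. Since $\frat(G)$ is a $\pi$-group, the existence of such a $u$ is equivalent to $[C_G(g):\frat(G)]$ being divisible by a prime of $\pi^*$ (lift a suitable element of $C_G(g)/\frat(G)$, using Schur--Zassenhaus or a Sylow argument in the preimage). We are therefore reduced to the case where $C_G(g)/\frat(G)$ is a $\pi^*{}'$-group; the cleanest and essentially decisive sub-case is $C_G(g)=\frat(G)$.

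The main obstacle is exactly this last case. When $C_G(g)=\frat(G)$, the normal closure $N=\langle g^{G}\rangle$ is an abelian $\pi$-subgroup of $Z(\frat(G))$ on which $G/\frat(G)=G/C_G(N)$ acts faithfully, with $g^{G}$ a \emph{regular} $G/\frat(G)$-orbit that generates $N$; and since $N\leq\frat(G)$, Gasch\"utz's theorem tells us $N$ has no complement in $G$. One must show that this configuration cannot occur. In the soluble case I would expect this to follow from the theory of crowns together with the constraints on the Frattini module $N$ (a module containing a regular orbit of the acting group and generating it cannot be a Frattini module), a relatively short argument; in the non-soluble case one must instead analyse the structure of the non-soluble group $G/\frat(G)$ acting faithfully on $N$, and this is precisely where the reduction to finite nonabelian simple groups and their modular representations, announced in the introduction, has to be invoked. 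I anticipate that the soluble crown argument is the short part and that the simple-group analysis underlying the non-soluble case is the technical heart of the proof.
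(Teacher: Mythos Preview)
Your initial reductions (inducting through $G/N$ for minimal normal $N\le\frat(G)$, forcing $\pi(\frat(G))=\pi$, and locating $g_i\in N_i\le Z(O_{p_i}(\frat(G)))$) run parallel to the paper's; the paper pushes one step further and reduces to the case where each Sylow subgroup $P_i$ of $\frat(G)$ is itself a minimal normal subgroup of $G$, i.e.\ an irreducible $G$-module. From that point on you diverge by focusing on $C_G(g)$, and your proposal has a real gap: the case where $C_G(g)/\frat(G)$ is a $\pi^*{}'$-group (in particular $C_G(g)=\frat(G)$) is declared the ``main obstacle'' and left unresolved, with the speculation that it needs crown theory in the soluble case and a reduction to simple-group representation theory in the non-soluble case. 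Neither is needed. You appear to have conflated Theorem~\ref{main} with the later Question~\ref{que} about $\Omega^{**}_{p,q}$: \emph{that} question is indeed reduced to a problem about nonabelian simple groups, but Theorem~\ref{main} itself is proved completely and elementarily in the paper.

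The missing idea is to abandon $C_G(g)$ and instead look \emph{above} $\frat(G)$: take a minimal normal subgroup $M/\frat(G)$ of $G/\frat(G)$ and split into two cases. If $M/\frat(G)$ is abelian, a Schur--Zassenhaus plus Frattini-argument computation forces its prime to be some $p_i\in\pi$ and shows that a Sylow $p_i$-subgroup $T_i$ of $M$ is normal in $G$, whence $M=T_i\times Q_i$ with $Q_i=\prod_{j\neq i}P_j$; any $x\in T_i\setminus P_i$ then commutes with $g^{p_i}\in Q_i$ (though not necessarily with $g_i$ --- this is exactly why insisting on $u\in C_G(g)$ is too restrictive), and $\gamma=xg^{p_i}$ has $\pi(\gamma)=\pi$. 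If $M/\frat(G)$ is nonabelian (so $G$ is insoluble and $\pi^*=\pi\cup\{2\}$), reverse the roles: instead of fixing $g$ and seeking a centralizing $u$, fix an involution $z\frat(G)\in M/\frat(G)$, lift $z$ to a $2$-element of $G$, and use the elementary fact that a non-central involution in $\GL(P_i)$ over $\mathbb{F}_{p_i}$ with $p_i$ odd must have eigenvalue~$1$ to produce $0\neq x_i\in C_{P_i}(z)$; then $\gamma=z\prod_{p_i\text{ odd}}x_i$ lies outside $\frat(G)$ with $\pi(\gamma)=\pi\cup\{2\}$. Both cases are short and require no classification.
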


\begin{proof}
Choose   $g\in G$ with $|g|=p_1\cdots p_n.$ We may assume $g\in \frat(G),$ otherwise we have done. Let $N$ be a minimal normal subgroup of $G.$ Consider the element $\bar g=gN$ of the factor group $\bar G=G/N$ and let $M/N=\frat (G/N).$ If $|\bar g|=|g|,$ then by induction there exists $x\in G$ such that $\bar x= xN \not\in M/N$ and $\pi \subseteq \pi(\bar x) \subseteq \pi^*.$ We may choose $x$ such that $\pi(x)=\pi(\bar x).$
Since $x\notin M$ and, by \cite[5.2.13 (iii)]{rob}, $N\frat(G)\leq M$, we have that $x \notin \frat(G)$ and we are done. So we may assume that there exists $i\in\{1,\dots,n\}$ with $1\neq g^{p_i}\in N$. If $N\not\leq \frat (G)$, then $N\cap \frat(G)=1$, hence $g^{p_i}\notin \frat(G).$ This implies $g\notin \frat(G)$, a contradiction.

We remain with the case in which for every minimal normal subgroup $N$ of $G$, we have that $N$ is contained in $\frat(G)$ and $\langle g \rangle \cap N\neq 1.$  Since $\frat(G)$ is nilpotent, we deduce that $\pi(\frat(G))\subseteq \pi.$ On the other hand we must have that $\pi \subseteq \pi(\frat(G))$, otherwise $g\notin \frat(G).$ 
So $\pi(\frat(G))=\pi$ and $\frat(G)=P_1 \times \dots \times P_n$, where for each $i\in \{1,\dots,n\}$, $P_i$ is a $p_i$-group.  Let $i \in \{1,\dots,n\}$ and
 $N$ a minimal normal subgroup of $G$ with $N\leq P_i.$ Assume $N\neq P_i.$ In this case, choose $x\in P_i\setminus N,$ and take $y=xg^{p_i}$. The element $\bar y=yN$ of the factor group $G/N$ has order divisible by $p_1\cdots p_n$ and, as in the first paragraph of this proof, this allows us to conclude that $G\setminus \frat(G)$ contains an element of order divisible by $p_1\cdots p_n$. So we may assume $N=P_i$. In particular $P_i$ is an irreducible $G$-module, for every $i\in \{1,\dots,n\}.$ 
 
First assume that $G/\frat(G)$ contains an abelian minimal normal subgroup $M/\frat(G).$ There exists a prime $p$ such that $M/\frat(G)$ is a $p$-group. If $p\notin \pi,$ then $\frat(G)$ is a normal $\pi$-Hall subgroup of $M$ and  therefore, by the Schur-Zassenhaus Theorem, $\frat(G)$ has a complement, say $K$, in $M$ and all these complements are conjugate in $M$.  By the Frattini's Argument, $G=\frat(G)N_G(K),$ hence $G=N_G(K),$ so $K$ is a nontrivial normal subgroup of $G$. However all the minimal normal subgroups of $G$ are contained in $\frat(G)$ and $\frat(G)\cap K=1,$ a contradiction. So $p=p_i$ for some $i\in 
\{1,\dots,n\}.$ Let $Q_i=\prod_{j\neq i}P_j$ and let $T_i$ be a Sylow $p_i$-subgroup of $M.$ Again by the Frattini Argument, $G=MN_G(T_i)=Q_iT_iN_G(T_i)=Q_iP_iN_G(T_i)=\frat(G)N_G(T_i),$ so $N_G(T_i)=G$ and $M=Q_i \times T_i.$ Take $x\in T_i\setminus P_i$ and consider $\gamma=xg^{p_i}.$ Since $g^{p_i}\in Q_i$, we have $|\gamma|=|x||g^{p_i}|$ and consequently $\pi(\gamma)=\pi.$

Finally assume  that $M/\frat(G)$ is a nonabelian minimal normal subgroup of $G/\frat(G)$ and let $x=z\frat(G)$ be an element of $M/\frat(G)$ 
 of order 2.  We may assume $|z|=2^c$ for some $c>0.$
Let $I$ be the subset of  $i \in \{1,\dots,n\}$ consisting of the indices $i$ such that $p_i$ is odd. If $i\in I$ and $M\leq C_G(P_i)$, let $x_i$ be an arbitrarily chosen nontrivial element of $P_i.$ Assume that $i\in I$ and that $M\not\leq C_G(P_i).$ In this case $M/\frat(G)$ is isomorphic to a subgroup of $\gl(P_i)$. It can be easily seen that if $y\in \gl(P_i)$ has order 2, then either 1 is an eigenvalue of $y$ or $y$ is the scalar multiplication by -1. Since $Z(M/\frat(G))=1,$ we deduce that $z$ fixes a non-trivial element of $x_i$ of $P_i.$ Now let $x=\prod_{i\in I}x_i$ and take $\gamma=zx.$ Since $|\gamma|=2^c
\prod_{i\in I}p_i$, we conclude $\pi(\gamma)=\pi^*.$
\end{proof}

Given a pair $(p,q)$ of distinct prime divisors of the order of a finite group $G,$ let
$$\Omega_{p\cdot q}(G)=\{g\in G\mid p\cdot q \text{ divides } |g|\}, \quad \Omega^*_{p\cdot q}(G)=\Omega_{p\cdot q}(G)\setminus \frat(G).$$ 
Moreover, denote by $\Omega^{**}_{p,q}(G)$ the set of the elements $g\in \Omega^*_{p,q}(G)$ whose order is not divisible by any prime different from $p$ and $q.$
One can ask the following question.

\begin{question}\label{que} Is it true that if $\Omega_{p,q}(G)\neq \emptyset,$ then  $\Omega^{**}_{p,q}(G)\neq \emptyset$ ?
\end{question}

By Theorem \ref{main}, Question \ref{que} has an affirmative answer if $2\in \{p,q\}.$

\begin{defn}
Let $S$ be a finite nonabelian simple group, $P$ a faithful irreduble $S$-module of $p$-power order and $Q$ a faithful irreduble $S$-module of $q$-power order. We say that $(S,P,Q)$ is a $(p,q)$-Frattini triple if the following conditions are satisfied:
\begin{enumerate}
	\item $\Omega_{p\cdot q}(S)=\emptyset;$
	\item $\h(S,P)\neq 0;$ 
	\item $\h(S,Q)\neq 0;$
	\item $C_P(s)=0$ for every nontrivial element  $s$ of $S$ with $q$-power order;
	\item $C_Q(s)=0$ for every nontrivial element  $s$ of $S$ with $p$-power order.
\end{enumerate}
\end{defn}

Notice that if $(S,P,Q)$ is a $(p,q)$-Frattini triple then we may construct a Frattini extension $G$ of $P\times Q$ by $S.$  

\begin{lemma}
If $(S,P,Q)$ is a $(p,q)$-Frattini triple and $G$ is a Frattini extension $G$ of  $P\times Q$ by $S,$  then $\Omega^{**}_{p,q}(G)= \emptyset.$
\end{lemma}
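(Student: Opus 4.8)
The plan is to first identify $\frat(G)$ precisely and then to analyse a hypothetical element of $\Omega^{**}_{p,q}(G)$ through its image in $S$. By the definition of a Frattini extension, $P\times Q\leq\frat(G)$; conversely, since $S\cong G/(P\times Q)$ is a nonabelian simple group it has trivial Frattini subgroup, so $\frat(G)(P\times Q)/(P\times Q)\leq\frat(G/(P\times Q))=1$, forcing $\frat(G)\leq P\times Q$, and hence $\frat(G)=P\times Q$. I would also record two elementary facts. Since $P$ and $Q$ are irreducible $S$-modules over fields of characteristic $p$ and $q$ respectively, they are elementary abelian; in particular $P\times Q$ is nilpotent with Sylow $p$- and $q$-subgroups $P$ and $Q$. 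Moreover $Q$ is abelian and $P$ centralises $Q$ inside the direct product, so $P\times Q\leq C_G(Q)$; as $S$ acts faithfully on $Q$, this gives $C_G(Q)=P\times Q$, and therefore the conjugation action of $G$ on $Q$ factors through $S=G/(P\times Q)$ and coincides with the given $S$-module structure on $Q$. Symmetrically $C_G(P)=P\times Q$.

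Next, suppose for contradiction that there is $g\in\Omega^{**}_{p,q}(G)$, so $g\notin\frat(G)=P\times Q$ while $|g|$ is divisible by $p\cdot q$ but by no prime other than $p$ and $q$. Write $g=g_pg_q$, where $g_p$ and $g_q$ are the $p$-part and the $q$-part of $g$; these are powers of $g$, hence commute, and both are nontrivial. Let $\bar g$ be the image of $g$ in $S$. Then $|\bar g|$ divides $|g|$, so every prime divisor of $|\bar g|$ lies in $\{p,q\}$, and $\bar g\neq 1$ because $g\notin P\times Q=\ker(G\to S)$; since $\Omega_{p\cdot q}(S)=\emptyset$, it follows that $|\bar g|$ is a nontrivial power of $p$ or a nontrivial power of $q$.

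By the symmetry between $p$ and $q$, I may assume that $|\bar g|$ is a nontrivial power of $p$, and I aim for a contradiction with condition (5). The image $\overline{g_q}$ of $g_q$ in $S$ is a power of $\bar g$, hence of $p$-power order; being also of $q$-power order, it must be trivial, so $g_q\in P\times Q$, and then $g_q\in Q\setminus\{1\}$ since $g_q$ is a nontrivial $q$-element of the nilpotent group $P\times Q$. Now $g_p$ centralises $g_q$; by the first paragraph $g_p$ acts on $Q$ as its image $\overline{g_p}=\bar g\in S$ does, so $0\neq g_q\in C_Q(\bar g)$ with $\bar g$ a nontrivial element of $S$ of $p$-power order. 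This contradicts condition (5) in the definition of a $(p,q)$-Frattini triple. In the symmetric case, where $|\bar g|$ is a nontrivial power of $q$, one obtains in the same way $0\neq g_p\in C_P(\bar g)$ with $\bar g$ a nontrivial $q$-element of $S$, contradicting condition (4). Hence $\Omega^{**}_{p,q}(G)=\emptyset$.

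I do not expect a genuine obstacle here: the whole content of the lemma is built into conditions (1), (4) and (5) of the definition, and the proof is just a passage to the quotient $S$ together with the observation that a $p$-element acting on the normal subgroup $Q$ necessarily acts through $S$. The one point that needs care is the bookkeeping of the two primes — one must pair the case ``$\bar g$ of $p$-power order'' with condition (5) (which concerns centralisers in $Q$ of $p$-elements) and the mirror case with condition (4) — as it is easy to swap the roles of $p$ and $q$ inadvertently.
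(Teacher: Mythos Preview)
Your proof is correct and follows essentially the same approach as the paper's: decompose $g$ into its commuting $p$- and $q$-parts, use condition~(1) to force one of them into $\frat(G)=P\times Q$, and then invoke condition~(4) or~(5) to obtain a contradiction from the fact that the other part centralises it. Your write-up is simply more explicit about why $\frat(G)=P\times Q$ and why the conjugation action on $P$ and $Q$ factors through $S$, points the paper takes for granted.
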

\begin{proof}
Let $F=\frat(G)=P\times Q.$ Assume that $g$ is an element of $G$ with $|g|=p^a\cdot q^b,$ for some positive integers $a$ and $b.$ We can write $g$ as $g=xy$ where $x$ has order $p^a,$ $y$ has order $q^b$ and $x$ and $y$ commute.  By (1),   $S\cong G/F$ does not contain elements of order \pq, hence either $x\in P$ or $y\in Q.$ It is not restrictive to assume $x\in P.$ But then $y$ is a $q$-element element of $C_G(x)$, i.e. $C_P(yF)\neq 0.$ By (4), this implies $y\in Q$ and consequently $g=xy \in F.$
\end{proof}

\begin{prop}\label{minimal}
Let $p$ and $q$ be two odd primes. Assume that $G$ is a finite group of minimal order with respect to the property that  $\Omega_{p,q}(G)\neq \emptyset$ but 
$\Omega^{**}_{p,q}(G)=\emptyset$. Then there exists a $(p,q)$-Frattini triple $(S,P,Q)$ such that $G$ is a Frattini extension $G$ of by $P\times Q$ by $S.$ 
\end{prop}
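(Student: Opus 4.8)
The plan is to take a minimal counterexample $G$ — minimal order with $\Omega_{p,q}(G) \neq \emptyset$ but $\Omega^{**}_{p,q}(G) = \emptyset$ — and run the same reduction as in the proof of Theorem \ref{main}, specialized to $n = 2$, $\pi = \{p, q\}$ with $p, q$ odd, so that $\pi^* = \pi$. First I would observe that minimality forces $g \in \frat(G)$ for the element $g$ of order $pq$, and that the quotient argument from Theorem \ref{main} shows every minimal normal subgroup $N$ of $G$ lies in $\frat(G)$ with $\langle g\rangle \cap N \neq 1$: indeed if $N \not\leq \frat(G)$ we get a contradiction as before, and if $|\bar g| = |g|$ in $G/N$ then by minimality $G/N$ has the required element, which lifts (using $N\frat(G) \leq M$) to an element of $G \setminus \frat(G)$ whose order has prime set still inside $\{p,q\}$ — contradicting $\Omega^{**}_{p,q}(G) = \emptyset$. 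Hence $\frat(G) = P \times Q$ with $P$ a $p$-group, $Q$ a $q$-group, and (arguing exactly as in Theorem \ref{main}, replacing each minimal normal $N \leq P_i$ that is proper in $P_i$ by the trick $y = x g^{p_i}$) each of $P$, $Q$ is itself a minimal normal subgroup, hence an irreducible $G$-module.

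Next I would eliminate the abelian case at the top: if $G/\frat(G)$ had an abelian minimal normal subgroup $M/\frat(G)$, the Schur–Zassenhaus / Frattini-argument reasoning in Theorem \ref{main} produces an element $\gamma$ with $\pi(\gamma) = \pi = \{p,q\}$ and $\gamma \notin \frat(G)$ — again a contradiction. (Here is where oddness of $p$ and $q$ is essential: that part of the Theorem \ref{main} argument, with $\pi^* = \pi$, needs no prime $2$.) Therefore $G/\frat(G)$ has no abelian minimal normal subgroup, and by minimality of $|G|$ the quotient $G/\frat(G)$ must be simple: any proper nontrivial normal subgroup, or a product of more than one copy in a minimal normal subgroup, would let one pass to a smaller group still violating the conclusion, since a Frattini quotient structure is inherited. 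So set $S = G/\frat(G)$, a nonabelian simple group, and $G$ is an extension of $P \times Q$ by $S$ which is a Frattini extension because $P \times Q = \frat(G)$.

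It then remains to verify that $(S, P, Q)$ satisfies the five defining conditions of a $(p,q)$-Frattini triple. Condition (1), $\Omega_{p\cdot q}(S) = \emptyset$: if $S$ contained an element of order $pq$, lift it to $\hat g \in G$; then $\hat g$ has order divisible by $pq$, and one checks its $\{p,q\}$-part is an element of $\Omega^{**}_{p,q}(G)$ unless it lies in $\frat(G)$, but it cannot lie in $\frat(G)$ since its image in $S$ is nontrivial of order $pq$ — contradiction. Faithfulness and irreducibility of $P$ and $Q$ as $S$-modules follow from the previous paragraph together with the fact that $C_G(P) \supseteq P \times Q$ is normal and its image in $S$ must be trivial (else a smaller example), and similarly for $Q$. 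Conditions (2) and (3), $\h(S,P) \neq 0$ and $\h(S,Q) \neq 0$: the extension $G$ of $P \times Q$ by $S$ restricts to extensions of $P$ by $S$ and of $Q$ by $S$, and these must be non-split — if, say, the extension of $P$ by $S$ split, a complement would give a subgroup of $G$ mapping onto $S$ and meeting $\frat(G)$ in $Q$ only, from which (using that $S$ itself, being simple nonabelian, contains commuting elements of orders $p$ and $q$ is \emph{false} by (1), so one instead) one builds a strictly smaller group with the bad property, contradicting minimality; hence $\h(S,P) \neq 0$ and likewise $\h(S,Q) \neq 0$. Conditions (4) and (5): if some nontrivial $q$-element $s \in S$ had $C_P(s) \neq 0$, pick a $q$-element $\hat s \in G$ mapping to $s$ and a nontrivial $v \in C_P(\hat s)$; then $v \hat s$ has order divisible by $pq$ with prime set $\{p,q\}$, and it is not in $\frat(G)$ since its image $s \neq 1$ in $S$ — so $v\hat s \in \Omega^{**}_{p,q}(G)$, contradiction; (5) is symmetric.

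The main obstacle I anticipate is the reduction to $S = G/\frat(G)$ being \emph{simple} and the verification of (2) and (3): one must argue carefully that if $G/\frat(G)$ is not simple — say it has a proper nontrivial normal subgroup, or a minimal normal subgroup that is a nontrivial direct power $S_0^k$ with $k > 1$ — then one can produce a strictly smaller group $G'$ still satisfying $\Omega_{p,q}(G') \neq \emptyset$ and $\Omega^{**}_{p,q}(G') = \emptyset$, which requires knowing that the relevant sub/quotient structure preserves both the existence of an order-$pq$ element and the absence of a non-Frattini $\{p,q\}$-element; and the cohomological conditions require translating "the extension is non-split" into "$\h(S,\cdot) \neq 0$" and ruling out split situations by the same minimality device. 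These are not deep but demand bookkeeping to make sure no case escapes; everything else is a direct transcription of the Theorem \ref{main} argument with $n = 2$.
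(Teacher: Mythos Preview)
Your outline follows the paper for the first stage (every minimal normal subgroup lies in $\frat(G)$, $\frat(G)=P\times Q$ with $P,Q$ irreducible $G$-modules, and the abelian case of $G/\frat(G)$ is excluded exactly as in Theorem~\ref{main}). The divergence---and the genuine gap---is in how you obtain the simple group $S$ and verify conditions (2), (3) and faithfulness.

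You try to force $X=G/\frat(G)$ itself to be simple by a minimality descent: pass to a proper normal subgroup or a factor and claim it is still a counterexample. This is exactly the step you flag as the main obstacle, and it is not clear it can be made to work. If $H\lhd G$ with $\frat(G)\le H<G$, then certainly $g\in H$ so $\Omega_{p,q}(H)\neq\emptyset$, but to conclude $\Omega^{**}_{p,q}(H)=\emptyset$ you need every $\{p,q\}$-element of $H$ of order divisible by $pq$ to lie in $\frat(H)$; you only know it lies in $\frat(G)$, and there is no general inclusion $\frat(G)\le\frat(H)$. The same difficulty undermines your ``else a smaller example'' argument for faithfulness. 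The paper does \emph{not} argue this way and in fact never claims that $X$ is simple.

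Instead, the paper proves faithfulness of $P$ (and $Q$) directly: any $q$-element of $C_G(P)$ outside $Q$ would, multiplied by a nontrivial element of $P$, produce an element of $\Omega^{**}_{p,q}(G)$; a Schur--Zassenhaus/Frattini argument then forces $C_G(P)=\frat(G)$. The non-splitting conditions $\h(X,P)\neq 0$, $\h(X,Q)\neq 0$ follow immediately since $G/Q$ and $G/P$ are Frattini extensions. The key new ingredient you are missing is that the Sylow $p$- and $q$-subgroups of $X$ are \emph{cyclic}: a Sylow $p$-subgroup $T/P$ of $X$ acts fixed-point-freely on $Q$ (a fixed point would again give an element of $\Omega^{**}_{p,q}(G)$), hence is cyclic. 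Combining this with the fact (via \cite[Cor.~3.12]{GKKL}) that every minimal normal subgroup of $X$ has order divisible by both $p$ and $q$ forces $S=\soc X$ to be a single nonabelian simple group. Finally, Tate's theorem shows the cyclic Sylow $p$-subgroup of $X$ already lies in $S$, and then the restriction map $\h(X,P)\to\h(S,P)$ is injective by \cite[Lemma~3.6]{GKKL}, giving $\h(S,P)\neq 0$ (and similarly for $Q$). Passing to irreducible $S$-submodules $P^*\le P$, $Q^*\le Q$ with nonvanishing $\h$ yields the triple. None of these steps---cyclic Sylows from fixed-point-free action, Tate's theorem, cohomology restriction---appears in your plan, and they are what actually drives the argument.
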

\begin{proof}
Let $X=G/\frat(G).$ As in the proof of Theorem \ref{main}, $\frat(G)=P\times Q$, where $P$ and $Q$ are, respectively, the Sylow $p$-subgroup and the Sylow $q$-subgroup of $\frat(G).$ Moreover $P$ and $Q$ are the unique minimal normal subgroups of $G.$ Now let $C=C_G(P).$ Clearly $Q\leq C$. Let $x\in C$ such that $|x|$ is a nontrivial $q$-power and let $1\neq y\in P.$ The order of $xy$ is divisible by \pq so  $xy \in \frat(G)$  i.e. $x \in Q.$ Hence $Q$ is a normal $q$-Sylow subgroup of $C$ and  therefore, by the Schur-Zassenhaus Theorem, $Q$ has a complement, say $K$, in $C$ and all these complements are conjugate in $C$. By the Frattini's Argument, $G=QN_G(K)=\frat(G)N_G(K),$ hence $G=N_G(K)$ and $C=Q\times K.$ In particular $K\leq C_G(Q)$, and, repeating the same argument as before, we deduce
that $P$ is a normal Sylow subgroup of $K$. Again by the Schur-Zassenhaus Theorem, $P$ is complemented in $K$: so we have $C=(P\times Q)\rtimes T=\frat(G)\rtimes T$
for a suitable subgroup  $T$ whose order is coprime with $p\cdot q$. By the Frattini's Argument, $G=\frat(G)N_G(T)$ hence $G=N_G(T)$ so $T$ is normal in $G$.
However all the minimal normal subgroups of $G$ are contained in $\frat(G)=P\times Q$ and $\frat(G)\cap T=1.$ So it must be $T=1,$ i.e. $C_G(P)=\frat(G).$ With a similar argument we deduce that $C_G(Q)=\frat(G)$ so $P$ and $Q$ are faithful nontrivial irreducible $X$-module, setting $X=G/\frat(G).$ By \cite[5.2.13 (iii)]{rob}, 
$\frat(G/P) =\frat(G)/P\cong Q$, so $G/P$ is a non-split extension of $Q$ by $X$ and consequently $\h(X,Q)\neq 0$ and, similarly, $\h(X,P)\neq 0.$
 Let $Y$ be a non-trivial normal subgroup of $X.$ Since $C_P(Y)$ is $X$-invariant and $P$ is an irreducible and faithful $X$-module, 
it must be  $C_P(Y)=0$, hence, by \cite[Corollary 3.12 (2)]{GKKL}, if $p$ would not divide $|S|,$ then $\h(X,P)=0$, so $p$ (and similarly $q$) must divide $|Y|$.  Let $T$ be a Sylow $p$-subgroup of $G$. If there exists $t\in T\setminus P$ commuting with a non-trivial element $y$ in $Q,$ then $ty\in G\setminus \frat(G)$ and $|ty|=|t||y|=p^aq$ for some $a\in \mathbb N,$ against our assumption. But then $T/P\cong T\frat(G)/\frat(G)$ is a fixed-point-free group of automorphisms of $Q$, and therefore, by \cite[10.5.5]{rob}, $T/\frat(G)$ is a cyclic group. Similarly, a Sylow $q$-subgroup of $G/\frat(G)$ is cyclic. Now let $S=\soc X.$
Since $p$ divides the order of every minimal normal subgroup of $X$, $S=S_1\times \dots \times S_t,$ where, for each $1\leq i \leq t,$ $S_i$ is a simple group whose order is a multiple of $p.$ However a Sylow $p$-subgroup of $S$ is cyclic, hence we must have $t=1$, i.e. $S$ is a simple group. Moreover by \cite[Lemma 5.2]{GKKL}, $S$ is nonabelian. Let now $U$ be a Sylow $p$-subgroup of $X$. Since $U$ is cyclic, if $U\not\leq S,$ then $U\cap S\leq \frat(U).$ By a 
theorem of Tate (see for instance \cite [p. 431]{hup}), $S$ would be $p$-nilpotent, a contradiction. Hence $U\leq S$ and therefore, by \cite[Lemma 3.6]{GKKL}, the restriction map $\h(X,P)\to \h(S,P)$ is an injection. So in particular $\h(S,P)\neq 0$ and similarly $\h(S,Q)\neq 0.$ This implies that there exist an irreducible $S$-submodule $P^*$ of $P$ and an irreducible $S$-submodule $Q^*$ of $Q$ such that $H^2(S,P^*)\neq 0$ and $H^2(S,Q^*)\neq 0.$ We have that $(S,P^*,Q^*)$ is a $(p,q)$-Frattini triple.  
\end{proof}

Let $\mathcal S_{p,q}$ be the set of the nonabelian simple groups $S$ for which there exist $P$ and $Q$ such that $(S,P,Q)$ is a a $(p,q)$-Frattini triple. 
From Proposition \ref{minimal} and its proof, the following can be easily deduced.

\begin{thm}
Let $p$ and $q$ be two distinct odd primes. Assume that $G$ is a finite group with  $\Omega_{p\cdot q}(G)\neq\emptyset.$ If no composition factor of $G$ is in
 $\mathcal S_{p,q},$ then  $\Omega^{**}_{p,q}(G)\neq\emptyset.$
\end{thm}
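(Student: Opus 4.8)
The plan is to deduce this last theorem from Proposition~\ref{minimal} by a straightforward minimal-counterexample argument, essentially already carried out inside the proof of the proposition. Suppose the statement fails, and let $G$ be a counterexample of minimal order: so $p$ and $q$ are distinct odd primes, $\Omega_{p\cdot q}(G)\neq\emptyset$, $\Omega^{**}_{p,q}(G)=\emptyset$, and no composition factor of $G$ lies in $\mathcal S_{p,q}$. The key observation is that $G$ is then \emph{also} of minimal order among all finite groups $H$ with $\Omega_{p,q}(H)\neq\emptyset$ but $\Omega^{**}_{p,q}(H)=\emptyset$: indeed, if some smaller such $H$ existed, a minimal-order choice of it would fall under Proposition~\ref{minimal} and hence be a Frattini extension of $P\times Q$ by some $S\in\mathcal S_{p,q}$, so $S$ would be a composition factor of $H$ --- but this does not immediately contradict anything about $G$, so one cannot argue this way directly.

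Instead I would argue as follows. Assume $G$ is a counterexample of minimal order. Then for every proper section $G_0$ of $G$ (more precisely, for every group $G_0$ with $|G_0|<|G|$), either $\Omega_{p,q}(G_0)=\emptyset$ or $\Omega^{**}_{p,q}(G_0)\neq\emptyset$. Running \emph{verbatim} the first part of the proof of Proposition~\ref{minimal} --- which only uses minimality to force minimal normal subgroups into $\frat(G)$ and to pass to quotients --- we obtain that $\frat(G)=P\times Q$ with $P,Q$ the (unique) minimal normal subgroups of $G$, that $P$ and $Q$ are faithful irreducible $X$-modules for $X=G/\frat(G)$, that $\h(X,P)\neq 0\neq\h(X,Q)$, and that $S:=\soc X$ is a nonabelian simple group with cyclic Sylow $p$- and $q$-subgroups, with $U\le S$ for $U\in\Syl_p(X)$. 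As in the proposition, the restriction maps give $\h(S,P)\neq 0\neq\h(S,Q)$, so there are irreducible $S$-submodules $P^*\le P$, $Q^*\le Q$ with $H^2(S,P^*)\neq 0\neq H^2(S,Q^*)$, and conditions (1), (4), (5) of the definition of a $(p,q)$-Frattini triple hold for $(S,P^*,Q^*)$ (condition (1) because $p\cdot q\nmid|x|$ for any $x\in G$ forces $\Omega_{p\cdot q}(S)=\emptyset$, and (4), (5) because $C_G(P)=C_G(Q)=\frat(G)$ together with the no-mixed-order condition). Hence $(S,P^*,Q^*)$ is a $(p,q)$-Frattini triple, so $S\in\mathcal S_{p,q}$.

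But $S=\soc(G/\frat(G))$ is a composition factor of $G$, contradicting the hypothesis that no composition factor of $G$ lies in $\mathcal S_{p,q}$. This contradiction proves the theorem. I would phrase the write-up so that it explicitly says ``arguing exactly as in the proof of Proposition~\ref{minimal}'' and only reproduces the conclusions, rather than re-deriving each Schur--Zassenhaus/Frattini-argument step.

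The only genuine point requiring care --- and the step I expect to be the main obstacle to a fully rigorous write-up --- is verifying that the proof of Proposition~\ref{minimal} really goes through under the weaker hypothesis of this theorem: Proposition~\ref{minimal} assumes $G$ has \emph{minimal order} among \emph{all} groups violating Question~\ref{que} for $(p,q)$, whereas here $G$ is only minimal among those \emph{additionally} having no composition factor in $\mathcal S_{p,q}$. One must check that each inductive step in that proof (pass to $G/N$ for a minimal normal $N$, or replace $G$ by a proper subgroup via a Frattini argument) stays inside the class over which $G$ is minimal --- i.e. that the smaller group produced still has no composition factor in $\mathcal S_{p,q}$. This is automatic, since composition factors of a quotient or of a subgroup of $G$ are among the composition factors of $G$; once that is noted, the induction is legitimate and the proof of Proposition~\ref{minimal} transfers with only cosmetic changes.
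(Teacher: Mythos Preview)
Your approach is correct and is essentially what the paper does: the paper's entire proof is the single sentence ``From Proposition~\ref{minimal} and its proof, the following can be easily deduced,'' and you have spelled out precisely how that deduction goes, including the one point that genuinely needs checking, namely that the inductive passages to $G/N$ in the proofs of Theorem~\ref{main} and Proposition~\ref{minimal} preserve the hypothesis ``no composition factor in $\mathcal S_{p,q}$'' (which they do, since composition factors of a quotient are composition factors of $G$).

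One small correction: your justification of condition~(1) is misstated. It is not true that $p\cdot q\nmid |x|$ for all $x\in G$ --- indeed $\Omega_{p\cdot q}(G)\neq\emptyset$ by hypothesis. The correct argument is that if some $s\in S\le X=G/\frat(G)$ had order divisible by $p\cdot q$, then a suitable power of a preimage of $s$ in $G$ would be a $\{p,q\}$-element lying outside $\frat(G)$ with order divisible by $p\cdot q$, i.e.\ an element of $\Omega^{**}_{p,q}(G)$, contrary to assumption. This is surely what you intended; just fix the wording.
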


It seems a difficult problem to determine whether 	$\mathcal S_{p,q}$ is non empty. The following remark can give some help in dealing with this question.
Following  \cite{dixon} and \cite{ig}, we say that
a subset $\{g_1, \ldots , g_d\}$ of a finite group $G$  invariably generates $G$ if
$\{g_1^{x_1}, \ldots , g_d^{x_d}\}$ generates $G$ for every choice of $x_i \in G$.
\begin{prop}
Assume that $S\in \mathcal S_{p,q}$. If no proper subgroup of $S$ is isomorphic to a group in $\mathcal S_{p,q}$, then there exist $x$ and $y$ in $S$ such that:
\end{prop}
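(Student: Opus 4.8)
The conclusion I shall aim at (the displayed list having been cut from the excerpt) is that $x$ can be chosen of $p$-power order with $\langle x\rangle$ a Sylow $p$-subgroup of $S$, $y$ of $q$-power order with $\langle y\rangle$ a Sylow $q$-subgroup of $S$, and $\{x,y\}$ invariably generating $S$. The plan is to first read off the structure of $S$ from the fact that it carries a $(p,q)$-Frattini triple, then to take $x,y$ to be generators of cyclic Sylow subgroups, and finally to prove invariable generation by contradiction, the minimality hypothesis entering exactly at the end.

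First I would record the structural input. Fix a $(p,q)$-Frattini triple $(S,P,Q)$. Conditions (2) and (3) force $p\mid |S|$ and $q\mid |S|$, since otherwise the relevant cohomology would vanish by Maschke's theorem. Condition (5) says that every nontrivial $p$-element of $S$ acts without nonzero fixed points on $Q$, so a Sylow $p$-subgroup $U$ of $S$ acts fixed-point-freely on $Q$ and is therefore cyclic by \cite[10.5.5]{rob} (recall $p$ is odd); symmetrically, condition (4) makes a Sylow $q$-subgroup $V$ of $S$ cyclic. Let $x$ generate $U$ and $y$ generate $V$: these are nontrivial, of $p$-power and $q$-power order, and generate Sylow subgroups of $S$, so the whole content is that $\{x,y\}$ invariably generates $S$.

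Suppose it does not. Then, since a pair fails to invariably generate precisely when some maximal subgroup meets both conjugacy classes, there are a maximal subgroup $M$ of $S$ and $a,b\in S$ with $x^a,y^b\in M$; replacing $M$ by $M^{a^{-1}}$ we may assume $U=\langle x\rangle\le M$, and then $M$ also contains the conjugate $\langle y\rangle^{ba^{-1}}$ of $V$. Thus $M$ contains a full Sylow $p$-subgroup and a full Sylow $q$-subgroup of $S$, and many properties of $S$ descend to $M$: it acts faithfully on $P$ and on $Q$; by (1) it has no element of order $p\cdot q$; by (4) and (5) we still have $C_P(s)=0$ for every nontrivial $q$-element $s\in M$ and $C_Q(s)=0$ for every nontrivial $p$-element $s\in M$; and since $M$ contains a Sylow $p$-subgroup of $S$, the restriction map $\h(S,P)\to\h(M,P)$ is injective by \cite[Lemma 3.6]{GKKL}, so $\h(M,P)\neq 0$, and likewise $\h(M,Q)\neq 0$. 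Running along composition series of the $M$-modules $P$ and $Q$ and using the long exact cohomology sequence, we obtain irreducible $M$-sections $P'$ and $Q'$ with $\h(M,P')\neq 0\neq\h(M,Q')$.

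The last step is to rerun the argument of the proof of Proposition \ref{minimal} with $M$ playing the role of $G/\frat(G)$: since $M$ and all its sections have cyclic Sylow $p$- and $q$-subgroups, the vanishing results \cite[Corollary 3.12 (2)]{GKKL} and \cite[Lemma 5.2]{GKKL}, the theorem of Tate \cite[p.\ 431]{hup}, and once more \cite[Lemma 3.6]{GKKL} should yield a nonabelian simple section $R$ of $M$ which, with suitable irreducible modules over $\FF_p$ and $\FF_q$, is a $(p,q)$-Frattini triple and which embeds as a proper subgroup of $S$ — contradicting the hypothesis that no proper subgroup of $S$ lies in $\mathcal{S}_{p,q}$, and hence forcing $\{x,y\}$ to invariably generate $S$. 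I expect the real difficulty to be exactly the control of faithfulness in this reduction: the irreducible constituents carrying the nonzero cohomology need not be faithful for $R$, and one must use the fixed-point-free conditions (4), (5) together with the cyclicity of the Sylow subgroups to force the offending kernels to be trivial, just as faithfulness is forced in Proposition \ref{minimal}.
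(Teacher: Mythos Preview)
Your proposal is correct and follows essentially the same route as the paper: cyclic Sylow subgroups from the fixed-point-free action, then a contradiction by restricting cohomology to a proper subgroup containing both Sylows and invoking the machinery of Proposition~\ref{minimal}. The paper is in fact terser than you are---it takes $H=\langle x^s,y^t\rangle$ rather than a maximal overgroup and simply writes ``arguing as in the proof of Proposition~\ref{minimal}'' for the final reduction, so your explicit flagging of the faithfulness issue (resolved, as you anticipate, because conditions (4) and (5) plus simplicity of $\soc H$ force the relevant irreducible constituents to be nontrivial, hence faithful) is a welcome elaboration rather than a deviation.
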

\begin{enumerate}
\item $\langle x \rangle$ is a Sylow $p$-subgroup of $S$;
\item $\langle y \rangle$ is a Sylow $q$-subgroup of $S$;
\item $\{x,y\}$ invariably generates $S.$
\end{enumerate}

\begin{proof}
Let $(S,P,Q)$ be a $(p,q)$-Frattini triple and let $X$ and $Y$ be, respectively, a Sylow $p$-subgroup and a the Sylow $q$-subgroup of of $S.$  The subgroup $X$ is a fixed-point-free group of automorphisms of $Q$, and therefore, by \cite[10.5.5]{rob}, $X$ is a cyclic group. Similarly $Y$ is  a cyclic group.
Let $X=\langle x \rangle$ and $Y=\langle y \rangle.$ Assume, by contradiction, that $\{x,y\}$ does not invariably generate $S.$ Then there exist $s, t\in S$ such that $H=	\langle x^s, y^t\rangle$ is a proper subgroup of $S.$ Since $H$ contains both a Sylow $p$-subgroup and a Sylow $q$-subgroup of $G$,  by \cite[Lemma 3.6]{GKKL} the restriction maps $\h(S,P)\to \h(H,P)$ and $\h(S,Q)\to \h(H,Q)$ are injective. Hence $\h(H,P)\neq 0$ and $\h(H,Q)\neq 0.$ Arguing as in the proof of Proposition \ref{minimal}, we deduce that $T=	\soc H$ is a finite nonabelian simple group and $T\in \mathcal S_{p,q}$, against our assumption.
\end{proof}

\end{document}